



\documentclass{amsart}
\usepackage{forest}
\useforestlibrary{edges}
\usetikzlibrary{arrows.meta,calc}
\usepackage[T1]{fontenc}   
\usepackage{amsfonts, amsmath, amssymb, amscd, amsthm}
\usepackage{graphicx, mathtools,xfrac,url}
\tikzset{-->/.style={dashed,->,arrows=-Latex},
         refract1/.style={to path={-- ++(-0.5,-.5)-- (\tikztotarget)}},
         refract2/.style={to path={-- ($(\tikztotarget)+(120:7mm)$)--(\tikztotarget)}}}



 

\usepackage{pgfplots}
\usepackage{tikz,tikz-cd,amscd}
\usetikzlibrary{calc}
\usepackage{mathdots,color}
\usetikzlibrary{decorations.pathreplacing,decorations.markings}
\usetikzlibrary{shapes,shapes.arrows,positioning}

\tikzset{->-/.style={decoration={
  markings,
  mark=at position #1 with {\arrow{>}}},postaction={decorate}}}
\tikzset{->-/.default=.5}

\newcounter{alphathm}

\theoremstyle{plain}
\newtheorem{theorem}{Theorem}

\newtheorem{lemma}[theorem]{Lemma}

\newtheorem*{claim*}{Claim}

\theoremstyle{definition}
\newtheorem{definition}[theorem]{Definition}

\newtheorem{example}[theorem]{Example}
\newtheorem{remark}[theorem]{Remark}

\newtheorem{remark-convention}[theorem]{Remark-Convention}

\numberwithin{equation}{section}
\numberwithin{theorem}{section}

\newcommand{\fakeenv}{} 

\newenvironment{restate}[2]  
{ 
 \renewcommand{\fakeenv}{#2} 
 \theoremstyle{plain} 
 \newtheorem*{\fakeenv}{#1~\ref{#2}} 
 \begin{\fakeenv}
}
{
 \end{\fakeenv}
}


\newcommand{\RR}{\mathbb{R}}

\newcommand{\ZZ}{\mathbb{Z}}

\newcommand{\hX}{\widehat X}

\newcommand{\ta}{{\widetilde a}}


\newcommand{\from}{\colon\thinspace}

%


    
\newcommand{\param}%
	{{\mathchoice{\mkern1mu\mbox{\raise2.2pt\hbox{$\centerdot$}}\mkern1mu}%
	{\mkern1mu\mbox{\raise2.2pt\hbox{$\centerdot$}}\mkern1mu}%
	{\mkern1.5mu\centerdot\mkern1.5mu}{\mkern1.5mu\centerdot\mkern1.5mu}}}



\begin{document}

\vspace{0.5in}

\renewcommand{\bf}{\bfseries}
\renewcommand{\sc}{\scshape}
\vspace{0.5in}


\title[Pseudo-Isometric Surgery]{Pseudo-Isometric Surgery}

\author[M.~Clay]{Matt Clay}
\address{Department of Mathematical Sciences \\
University of Arkansas\\
Fayetteville, AR 72701}
\email{mattclay@uark.edu}

\author[J.~Thompson]{Josh Thompson}
\address{Department of Mathematics \& Computer Science \\
Northern Michigan University \\
Marquette, MI 49855}
\email{joshthom@nmu.edu}


\subjclass[2020]{Primary 51F30, 53C23; Secondary 20F65, 20E08}
\keywords{quasi-isometry, pseudo-isometry}

\begin{abstract} 
We introduce a type of surgery on metric spaces.  This surgery, in some sense, seeks to replace a subspace $S$ of a metric space $X$ with another metric space $T$ via a function $f \from S \to T$.  When $T$ is a discrete space, this amounts to collapsing the subspace according to the function.  This surgery results in a new metric space we denote $\widehat{X}_f$ and there is a natural function $F \from X \to \widehat{X}_f$ induced from $f$.  Our primary interest is investigating if properties of the original function $f$ are inherited by the induced function $F$.  We show that if $f$ is a pseudo-isometry then so is $F$.  However, for a quasi-isometry, a very natural generalization of a pseudo-isometry that is prevalent in geometric group theory, such a result does not hold.

\end{abstract}

\maketitle


\section{Introduction}\label{sec:intro}


The idea of removing a subset from a space and replacing it with a modified version is one the most basic transformations of mathematics.  For example the M\"obius band, often obtained as the result of a cut/twist/reglue operation can also arise from a remove/alter/replace operation on the annulus, see Figure~\ref{fig:annulus}.   
Such transformations are used to produce new spaces that are simulatneously different from, yet similar to, the original.  

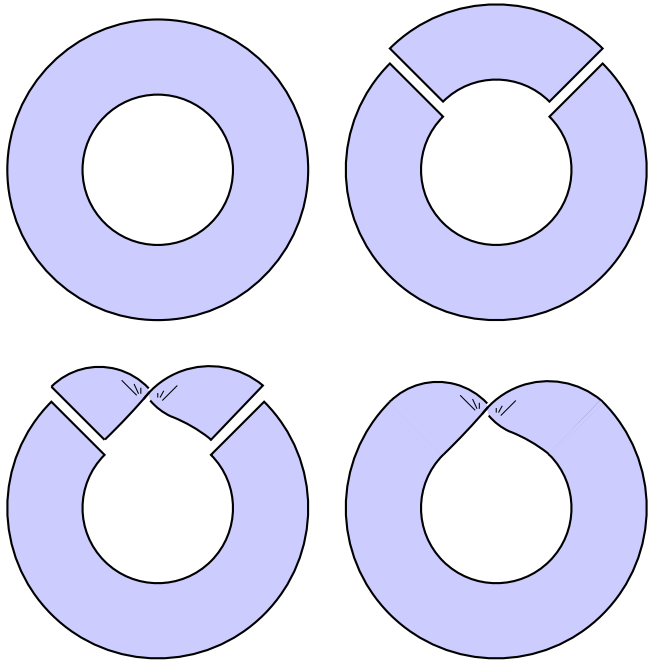
\begin{figure}[ht]
\centering
\begin{tikzpicture}
\def\x{4.5}
\def\xx{9}
\def\xxx{13.5}
\def\a{45}
\def\aa{135}
\def\aaa{-225}
\def\u{.2}
\def\uu{.09}
\def\uuu{.06}
\def\m{44}

\fill [blue!20,even odd rule] (0,0) circle[radius=2cm] circle[radius=1cm];
\foreach \radius in {1,2}
  \draw [thick] (0,0) circle[radius=\radius cm];

\begin{scope}[shift={(\x,0)}]
        \draw[fill=blue!20,thick] ({cos(\a)},{sin(\a)}) coordinate (beta) --  ({2*cos(\a)},{2*sin(\a)}) arc (\a:\aaa:2) coordinate (alpha)  --  ({2*cos(\aa)},{2*sin(\aa)}) -- ({cos(\aa)},{sin(\aa)}) arc (\aa:360+\a:1) -- cycle;
    \end{scope}

\begin{scope}[shift={(\x,.2)}]
        \draw[fill=blue!20,thick] ({cos(\a)},{sin(\a)}) coordinate (beta) --  ({2*cos(\a)},{2*sin(\a)}) arc (\a:\aa:2) coordinate (alpha)  --  ({2*cos(\aa)},{2*sin(\aa)}) -- ({cos(\aa)},{sin(\aa)})  arc (\aa:\a:1) -- cycle;
    \end{scope}

\begin{scope}[shift={(0,-\x)}]
        \draw[fill=blue!20,thick] ({cos(\a)},{sin(\a)}) coordinate (beta) --  ({2*cos(\a)},{2*sin(\a)}) arc (\a:\aaa:2) coordinate (alpha)  --  ({2*cos(\aa)},{2*sin(\aa)}) -- ({cos(\aa)},{sin(\aa)}) arc (\aa:360+\a:1) -- cycle;
    \end{scope}

\begin{scope}[shift={(0,-\x+.2)}]

                            F I L L 
    \fill[blue!20,thick]
    ({2*cos(\aa) - .0075},{2*sin(\aa)}) arc (\aa:43:.9) --  
    ({2*cos(.5*(\aa + \a)) - .19},{1.5*sin(.5*(\aa+\a)) - .2555555}) arc (125:150:1.7) --  
    ({2*cos(\aa)},{2*sin(\aa)}) arc (\aa:43:.9) --   
    ({cos(\aa) + .006},{sin(\aa) + .005}) arc(\aa + 180 - 1 : \aa + 180 + 4.5: 9) -- 
    ({2*cos(.5*(\aa + \a)) - .22},{1.5*sin(.5*(\aa+\a))});

    \fill[blue!20,thick]
    ({cos(\aa) + .006},{sin(\aa) + .005}) arc(\aa + 180 - 1 : \aa + 180 + 4.5: 9) -- 
    ({2*cos(.5*(\aa + \a)) - .22},{1.5*sin(.5*(\aa+\a))}); 

    \fill[blue!20, thick] 
    (-.1, 1.23) arc(220:245:.8) -- 
    (0.2,1.01) arc(68:52:2) --
    ({cos(\a)},{sin(\a)}) coordinate (beta) -- 
    ({2*cos(\a) - .0165},{2*sin(\a) + .016})  arc (50:138:1.1); 

                            D R A W 

    \def\ll{.1}
    \draw[thick] 
    (-.1, 1.23) arc(220:245:.8) -- 
    (0.2,1.01) arc(68:52:2) --
    ({cos(\a)},{sin(\a)}) coordinate (beta) -- 
    ({2*cos(\a) - .0165},{2*sin(\a) + .016})  arc (50:143:1.1); 

    \draw[thick] ({cos(\aa) + .006},{sin(\aa) + .005}) arc(\aa + 180 - 1 : \aa + 180 + 4.5: 9); 

    \draw[thick]
    ({2*cos(\aa)},{2*sin(\aa)}) arc (\aa:43:.9);  

    \draw[thick]
    ({2*cos(\aa)},{2*sin(\aa)}) --  ({cos(\aa)},{sin(\aa)});  

    \def\dm{1}
    \draw (.06, 1.23)--  ++(\u*\dm,\u);
    \draw (.03, 1.27) -- ++(\uu*\dm - .04,\uu);
    \draw (0, 1.27) --   ++(\uuu*\dm - .06,\uuu);

    \draw (-.48, 1.50) -- ++(\u*\dm,-\u);
    \draw (-.315, 1.45) -- ++(\uu*\dm - .03, -\uu - .04);
    \draw (-.225, 1.4) -- ++(\uuu*\dm - .06,-\uuu - .02);

\end{scope}

\begin{scope}[shift={(\x,-\x)}]
        \draw[fill=blue!20,thick] ({cos(\a)},{sin(\a)}) coordinate (beta) --  ({2*cos(\a)},{2*sin(\a)}) arc (\a:\aaa:2) coordinate (alpha)  --  ({2*cos(\aa)},{2*sin(\aa)}) -- ({cos(\aa)},{sin(\aa)}) arc (\aa:360+\a:1) -- cycle;
    \end{scope}

\begin{scope}[shift={(\x,-\x)}]

                            F I L L 
    \fill[blue!20,thick]
    ({2*cos(\aa) - .0075},{2*sin(\aa)}) arc (\aa:43:.9) --  
    ({2*cos(.5*(\aa + \a)) - .19},{1.5*sin(.5*(\aa+\a)) - .2555555}) arc (125:150:1.7) --  
    ({2*cos(\aa)},{2*sin(\aa)}) arc (\aa:43:.9) --   
    ({cos(\aa) + .006},{sin(\aa) + .005}) arc(\aa + 180 - 1 : \aa + 180 + 4.5: 9) -- 
    ({2*cos(.5*(\aa + \a)) - .22},{1.5*sin(.5*(\aa+\a))});

    \fill[blue!20,thick]
    ({cos(\aa) + .006},{sin(\aa) + .005}) arc(\aa + 180 - 1 : \aa + 180 + 4.5: 9) -- 
    ({2*cos(.5*(\aa + \a)) - .22},{1.5*sin(.5*(\aa+\a))}); 

    \fill[blue!20, thick] 
    (-.1, 1.23) arc(220:245:.8) -- 
    (0.2,1.01) arc(68:52:2) --
    ({cos(\a)},{sin(\a)}) coordinate (beta) -- 
    ({2*cos(\a) - .0165},{2*sin(\a) + .016})  arc (50:138:1.1); 

                            D R A W 

    \def\ll{.1}
    \draw[thick] 
    (-.1, 1.23) arc(220:245:.8) -- 
    (0.2,1.01) arc(68:52:2) --
    ({cos(\a)},{sin(\a)}) coordinate (beta) -- 
    ({2*cos(\a) - .0165},{2*sin(\a) + .016})  arc (50:143:1.1); 

    \draw[thick] ({cos(\aa) + .006},{sin(\aa) + .005}) arc(\aa + 180 - 1 : \aa + 180 + 4.5: 9); 

    \draw[thick]
    ({2*cos(\aa)},{2*sin(\aa)}) arc (\aa:43:.9);  

    \def\dm{1}
    \draw (.06, 1.23)--  ++(\u*\dm,\u);
    \draw (.03, 1.27) -- ++(\uu*\dm - .04,\uu);
    \draw (0, 1.27) --   ++(\uuu*\dm - .06,\uuu);

    \draw (-.48, 1.50) -- ++(\u*\dm,-\u);
    \draw (-.315, 1.45) -- ++(\uu*\dm - .03, -\uu - .04);
    \draw (-.225, 1.4) -- ++(\uuu*\dm - .06,-\uuu - .02);

                            F I L L 
   
    \def\mm{.01}
    \def\mm{.01}
    \def\af{.5}
    \draw[blue!20,very thick] ({cos(\aa)-\mm},{sin(\aa)+\mm}) -- ({2*cos(\aa)+\mm},{2*sin(\aa)-\mm}) ;
    \draw[blue!20,very thick] ({cos(\a)+\mm},{sin(\a)+\mm}) -- ({2*cos(\a)-\mm},{2*sin(\a)-\mm}) ;
    \draw[blue!20,very thick] ({cos(\a)+\mm},{sin(\a)+\mm}) -- ({2*cos(\a + \af)-\mm},{2*sin(\a + \af)-\mm}) ;
    \end{scope}

\end{tikzpicture}
\caption{The M\"obius band is obtained from a remove/surgery on the annulus.}
\label{fig:annulus}
\end{figure}


In 1910  Max Dehn introduced a procedure in three dimensions, later referred to as "surgery" by Milnor and Thom \cite{Milnor2} and now known as \emph{Dehn Surgery}.  In it, one first removes a solid torus $T$ from a 3--manifold and then `sews it back differently', see \cite{Gordon} and \cite{Stillwell} for details.  There are many different ways to sew the solid torus back in.  Specifically, note that simple closed curves on the torus can be identified with ordered pairs of relatively prime integers $(m,n)$ corresponding to how the curve winds around the surface.  Once the solid torus $T$ is removed we can glue it back in so that the  $(m,n)$ curve is sewn to the curve that previously matched the $(1,0)$ curve.  Each different choice of $m$ and $n$ results in a potentially different 3--manifold.  In fact, every closed, orientable, connected 3--manifold can be obtained by Dehn surgery on a collection of solid tori in the 3-sphere; a result known as the Lickorish--Wallace theorem \cite{Rolfsen}.   


One might also examine the impact of surgery on the underlying geometry of a space.  Consider $\RR$ under the standard metric.  For this surgery, instead of `sewing it back differently' we `sew in something else' as follows: remove every interval of the form $[2n, 2n+1]$ where $n \in \ZZ$  and replace each with a single point, see Figure~\ref{fig:intro-ex}.  
The resulting space $Y$ with the obvious path metric is isometric to $\RR$. 
This surgery is clearly not an isometry but, as will become clear later, it is a \textit{quasi-isometry} (in fact also a pseudo-isometry too).   

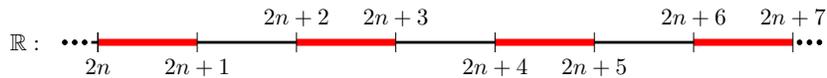
\begin{figure}[ht]
\scalebox{.88}{
\begin{tikzpicture}
\def\s{1.75}
\node at (-1,0) {$\RR:$};
\draw[very thick] (0,0) -- (6*\s,0);
\filldraw (6*\s + 0.2,0) circle [radius=1pt];
\filldraw (6*\s + 0.35,0) circle [radius=1pt];
\filldraw (6*\s + 0.5,0) circle [radius=1pt];

\filldraw (-.6+ 0.2,0) circle [radius=1pt];
\filldraw (-.6 + 0.35,0) circle [radius=1pt];
\filldraw (-.6 + 0.5,0) circle [radius=1pt];
\draw[line width=3pt,red] (0.1,0) -- (1.5+0.1,0) (3+0.1,0)  -- (4.5+0.1,0) (6+0.1,0)  --  (7.5+0.1,0) (9+0.1,0)  --  (10.5+0.1,0);
\draw (1.6,-0.15) -- (1.6,0.15) (0.1,-0.15) -- (0.1,0.15);
\draw (3.1,-0.15) -- (3.1,0.15) (0.1,-0.15) -- (0.1,0.15);
\draw (4.6,-0.15) -- (4.6,0.15) (0.1,-0.15) -- (0.1,0.15);
\draw (6.1,-0.15) -- (6.1,0.15) (10.6,-0.15) -- (10.6,0.15);
\draw (7.6,-0.15) -- (7.6,0.15) (9.1,-0.15) -- (9.1,0.15);
\node at (0+0.1,0) [label=below:{$2n$}] {};
\node at (1.5+0.1,0) [label=below:{$2n+1$}] {};
\node at (3+0.1,0) [label=above:{$2n+2$}] {};
\node at (4.5+0.1,0) [label=above:{$2n+3$}] {};
\node at (6+0.1,0) [label=below:{$2n+4$}] {};
\node at (7.5+0.1,0) [label=below:{$2n+5$}] {};
\node at (9+0.1,0) [label=above:{$2n+6$}] {};
\node at (10.5+0.1,0) [label=above:{$2n+7$}] {};
\end{tikzpicture}
    }
\caption{Replacing the red sets by points gives a space quasi-isometric to $\RR$.}
\label{fig:intro-ex}
\end{figure}

We formalize this notion of surgery for an arbitrary metric space $X$ using a map $f \from S \to T$ where $S$ is a subspace of $X$ and $T$ is another metric space.  The question we ask is whether properties of the (local) map used to sew the space $T$ onto $S$ are inherited by the (global) natural map of the total space to the surgered space.  The specific properties we examine in this paper are that of being a quasi-isometry or a pseudo-isometry.  


Quasi-isometries are a very important class of functions on metric spaces that allow for a controlled distortion.  Such a function is a transformation between metric spaces that distorts distances by a uniformly bounded amount, above a given scale.  A precise definition is given in Definition~\ref{def:pseudo qi} and its subsequent remark.  Implicit in the work of $\check{\mbox{S}}$varc \cite{Svarc} in 1955 and Milnor  \cite{Milnor} in 1968, the notion of a quasi-isometry is central to Gromov's idea of \textit{coarse equivalence} of metric spaces.  In 1981, Gromov \cite{Gromov} defined quasi-isometry the way it is used today.  A standard example of a quasi-isometry is the (discontinuous) map that sends each real number to the greatest integer less than or equal to it \cite{Clay}.  For a general introduction to quasi-isometries see \cite{Brid-Helf}.  

Not all quasi-isometries are discontinous, such as the map that collapses the unit interval in $\RR$ to the origin and then scales everything by a factor of two as well as the example illustrated above.  These maps are also examples of a \textit{pseudo-isometry}, a term introduced by Mostow \cite{Mostow} in 1974 in his study of arbitrary symmetric spaces (see also \S 5.9 of \cite{Thurston}).  A pseudo-isometry satisfies a stronger condition than a quasi-isometry in that it has no additive term on the upper bound.  Indeed, a pseudo-isometry is a Lipschitz map that distorts distances by a uniformly bounded amount, above a given scale.  Not all continuous quasi-isometries are pseudo-isometries. An explicit example is given in Example~\ref{ex:strong}.

Our main result is to show that a surgery specified by a pseudo-isometry yields a natural map from the original space to surgered space that is also a pseudo-isometry. 

\begin{theorem}\label{thm:main}
Suppose $(X,d_X)$ and $(T,d_T)$ are metric spaces and consider a subset $S \subset X$ as a metric space with metric induced from $X$.  If $f \from S \to T$ is a pseudo-isometry, then the natural map $F \from X \to \hX_f$ is a pseudo-isometry as well.
\end{theorem}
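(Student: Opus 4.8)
The plan is to verify the three defining conditions of a pseudo-isometry for $F$ (Definition~\ref{def:pseudo qi}): a Lipschitz upper bound, a coarse lower bound, and coarse surjectivity. Throughout, let $K \ge 1$ and $C \ge 0$ be constants witnessing that $f$ is a pseudo-isometry, so that $\frac1K d_X(a,b) - C \le d_T(f(a),f(b)) \le K\,d_X(a,b)$ for all $a,b \in S$ and every point of $T$ lies within $d_T$-distance $C$ of $f(S)$. Recall that the metric on $\hX_f$ is the path/quotient metric obtained by gluing $T$ to $X$ along the identifications $s \sim f(s)$: the distance between two classes is the infimum, over all finite chains joining them that alternate between hops measured in $X$ and hops measured in $T$ (with each identification $s \leftrightarrow f(s)$ free of charge), of the total length of the chain. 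Two of the conditions are then immediate. For the upper bound, the single-hop chain from $x$ to $x'$ inside $X$ gives $d_{\hX_f}(F(x),F(x')) \le d_X(x,x')$, so $F$ is $1$-Lipschitz. For coarse surjectivity, a class of $\hX_f$ is either $F(x)$ for some $x\in X$ or the class of a point $t\in T$; in the latter case coarse surjectivity of $f$ produces $s\in S$ with $d_T(t,f(s)) \le C$, whence $F(s)$ lies within $C$ of that class.

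The content of the theorem is the lower bound: I must find $K'\ge 1$ and $C'\ge 0$ with $d_X(x,x') \le K'\, d_{\hX_f}(F(x),F(x')) + C'$. Fix $x,x'\in X$ and, for $\epsilon>0$, a chain joining $F(x)$ to $F(x')$ of length at most $d_{\hX_f}(F(x),F(x'))+\epsilon$. Collapsing consecutive hops on the same side via the triangle inequality only shortens the chain, so I may assume it alternates sides and is encoded by crossing points $s_1,r_1,\dots,s_m,r_m\in S$: it runs in $X$ from $x$ to $s_1$, crosses to $f(s_1)$, runs in $T$ to $f(r_1)$, crosses back to $r_1$, continues in $X$ to $s_2$, and so on, ending with an $X$-hop from $r_m$ to $x'$. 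Writing $L \le d_{\hX_f}(F(x),F(x'))+\epsilon$ for its length,
\[
  L \;=\; d_X(x,s_1) + \sum_{i=1}^{m} d_T\!\big(f(s_i),f(r_i)\big) + \sum_{i=1}^{m-1} d_X(r_i,s_{i+1}) + d_X(r_m,x').
\]
The naive estimate applies the pseudo-isometry lower bound to each excursion, $d_X(s_i,r_i) \le K\, d_T(f(s_i),f(r_i)) + KC$, and sums them; but this contributes an additive error $mKC$ proportional to the number $m$ of excursions, which is not controlled by $L$. \textbf{Controlling this additive error against an a priori unbounded $m$ is the crux of the argument.}

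The resolution is to invoke the lower bound of $f$ only \emph{once}, on the pair consisting of the first entry point $s_1$ and the last exit point $r_m$, and to estimate $d_T(f(s_1),f(r_m))$ directly from the chain. By the triangle inequality in $T$ along the sequence $f(s_1),f(r_1),f(s_2),\dots,f(s_m),f(r_m)$, combined with the Lipschitz \emph{upper} bound $d_T(f(r_i),f(s_{i+1})) \le K\, d_X(r_i,s_{i+1})$ applied to each intermediate $X$-hop,
\[
  d_T\!\big(f(s_1),f(r_m)\big) \;\le\; \sum_{i=1}^{m} d_T\!\big(f(s_i),f(r_i)\big) + K\sum_{i=1}^{m-1} d_X(r_i,s_{i+1}) \;\le\; K L,
\]
where the last step uses that these two sums are disjoint pieces of $L$ and $K\ge 1$. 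Feeding this into $d_X(s_1,r_m) \le K\, d_T(f(s_1),f(r_m)) + KC \le K^2 L + KC$ and then using $d_X(x,x') \le d_X(x,s_1) + d_X(s_1,r_m) + d_X(r_m,x')$ with $d_X(x,s_1)+d_X(r_m,x') \le L$ yields
\[
  d_X(x,x') \;\le\; (1+K^2)\,L + KC .
\]
The key point is that because the intermediate $X$-hops were converted into $T$-distance (via the upper bound) rather than re-expanded one excursion at a time, the additive constant $KC$ appears exactly once, independently of $m$.

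Letting $\epsilon \to 0$ gives $d_X(x,x') \le (1+K^2)\, d_{\hX_f}(F(x),F(x')) + KC$, the required lower bound with $K' = 1+K^2$ and $C' = KC$; the degenerate cases (the chain never leaving $X$, i.e. $m=0$, or $x,x'\in S$ so that some initial or terminal $X$-hop has length zero) only make the estimate easier. Together with the $1$-Lipschitz upper bound and coarse surjectivity established above, this shows that $F$ is a pseudo-isometry, completing the proof. I expect the only genuinely delicate point to be the bookkeeping that isolates the single additive term, and writing the chain-collapsing step precisely enough that the alternating encoding $s_1,r_1,\dots,s_m,r_m$ is justified for an arbitrary near-minimal chain.
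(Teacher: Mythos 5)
Your core argument is essentially the paper's: the paper isolates exactly your "crux" as Lemma~\ref{lem:admissible length lower bound}, and its proof is the same computation you give --- apply the pseudo-isometry lower bound of $f$ \emph{once}, to the first entry point and last exit point (your $s_1$ and $r_m$, the paper's $y_1$ and $x_k$), expand $d_T(f(s_1),f(r_m))$ by the triangle inequality along the alternating chain, and convert the intermediate $X$-hops into $T$-distance via the Lipschitz upper bound so that the additive constant $KC$ appears only once, independently of $m$. Your constants $(1+K^2,\,KC)$ versus the paper's $(K^2,\,KC)$ differ only because the paper absorbs the terms $d_X(x,s_1)+d_X(r_m,x')$ into the factor $K^2$ rather than adding them separately. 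The upper bound and coarse surjectivity are handled identically.

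There is, however, one genuine omission. You take a chain of length at most $d_{\hX_f}(F(x),F(x'))+\epsilon$ and assert that it "runs in $X$ from $x$ to $s_1$." But $d_{\hX_f}(F(x),F(x'))$ is an infimum over admissible sequences between \emph{arbitrary} points of $F^{-1}(F(x))$ and $F^{-1}(F(x'))$, and the fiber $F^{-1}(F(x))$ can contain points $x_0\neq x$: this happens when $x\in S$ and $f(x_0)=f(x)$, and more generally whenever the pseudo-metric $p_{X'}$ vanishes between the corresponding classes. So the near-optimal chain may begin at some $x_0$ and end at some $y_0$ with $F(x_0)=F(x)$, $F(y_0)=F(x')$, and your estimate only bounds $d_X(x_0,y_0)$. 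The paper fills this with Lemma~\ref{lem:same image upper bound}, which shows $F(x)=F(x_0)$ forces $d_X(x,x_0)\le 3KC$ (itself an application of the key lemma to a chain of length less than any $\epsilon$, together with the fact that $f(x)=f(x_0)$ implies $d_X(x,x_0)\le KC$); two more triangle-inequality terms then enlarge the additive constant to $7KC$. This is a repairable bookkeeping gap rather than a flaw in the method, but as written your proof does not establish the lower bound for the actual distance $d_{\hX_f}(F(x),F(x'))$.
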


The definition of $\widehat{X}_f$ appears in Section~\ref{sec:Xf}.  The proof of Theorem~\ref{thm:main} appears in Section~\ref{sec:main}.

The metric on $\hX_f$, obtained by variation of the quoteint metric space construction (see e.g., \cite[Chapter~I.5]{Brid-Helf}), is defined via certain alternating sequences of pairs of points in $X$ and $T$ taking into account the function $f$.  We call such sequences \emph{admissible} and the length of such is the sum of the distances for each pair (Definition~\ref{def:admissible seq}).  The main technical step to prove Theorem~\ref{thm:main} is Lemma~\ref{lem:admissible length lower bound} where we give a lower bound on the length of an admissible sequence in terms of the distance in $X$ between its endpoints.  It is in this lemma that we need to restrict to pseudo-isometries as opposed to quasi-isometries to control the amount of additive error.  

One example that is covered by Theorem~\ref{thm:main} is the map $F \from \RR \to \RR$ that collapses each interval of the form $[2n,2n+1]$ to a point. This is the example discussed above.  For this example we have that
$S = \{[2n,2n+1] \mid n \in \ZZ\}$, $T = \ZZ$ and $f(s) = \lfloor{\frac{s}{2}}\rfloor$.  In this case as we mentioned above, the surgered space is isometric to $\RR$.  

We present several examples in Section~\ref{sec:examples}.  First, an application of Theorem~\ref{thm:main} to regular trees is given in Example~\ref{ex:tree}.  This shows that a regular tree of degree four is pseudo-isometric to a regular tree of degree six, and we show how this generalizes to other degrees.  Next, in Example~\ref{ex:all-trees}, we use a different mapping and Theorem~\ref{thm:main} to show that every regular tree of degree $n$ is pseudo-isometric to a regular tree of degree three. Finally, in Example~\ref{ex:strong} we show the "pseudo-" assumption is necessary in the following sense:  When the gluing map is weakened to that of a quasi-isometry the natural map to the surgered space fails to be a quasi-isometry.  It remains open under what conditions does a quasi-isometric gluing map yield a quasi-isometric natural map between the original and the surgered space.

\subsection{Acknowledgements}
We would like to thank the referees for a careful reading of this work.  

\section{Construction of the surgered space.} \label{sec:Xf}

In this Section we define the surgered space $\widehat{X}_f$ using a notion of \emph{admissible sequences} (Definition~\ref{def:admissible seq}) which ties together the spaces $X$, $S$, and $T$  via the pseudo-isometry $f \from S \to T$.  We also present a few properties of admissible sequences that form the essential parts of the proof of Theorem~\ref{thm:main}.

To begin, we state the definition of a pseudo-isometry.

\begin{definition}\label{def:pseudo qi}
Let $(S,d_S)$ and $(T,d_T)$ be metric spaces.  A map $f \from S \to T$ is a \emph{pseudo-isometry} if there exist contants $K \geq 1$ and $C \geq 0$ such that the following hold.
\begin{enumerate}
\item For all $x_0,x_1 \in S$, we have:
\begin{equation*}
\frac{1}{K}d_S(x_0,x_1) - C \leq d_T(f(x_0),f(x_1)) \leq Kd_S(x_0,x_1).
\end{equation*}
\item For all $y \in T$, there is an $x \in S$ with $d_T(f(x),y) \leq C$.
\end{enumerate}
\end{definition}

\begin{remark}
If we allow the upper bound to also have an additive  constant, i.e.,
\[d_T(f(x_0),f(x_1)) \leq Kd_S(x_0,x_1) + C\]
then the map is called a \emph{quasi-isometry}.
\end{remark}


A metric on the surgered space will be defined via sequences of pairs of points that (potentially) intersect the sets where the surgery occurs, called \textit{admissible sequences}.  In what follows $X$ and $T$ are metric spaces with metrics $d_X$ and $d_T$ respectively, and $S \subseteq X$ is a subspace considered as a metric space with the metric induced from $X$.  We also have a pseudo-isometry $f \from S \to T$ with constants $K$ and $C$ as in Definition~\ref{def:pseudo qi}.

\begin{definition}\label{def:admissible seq}
An \emph{admissible sequence} is a sequence of pairs of the form:
\begin{equation}\label{eq:admissbile path}
\gamma : (x_0,y_1), (u_1,v_1), (x_1,y_2), \ldots ,(u_k,v_k), (x_k,y_{k+1})
\end{equation}
where:
\begin{enumerate}
\item $x_0,y_{k+1} \in X$,
\item $x_i,y_i \in S$ for $i = 1,\ldots,k$,
\item $u_i,v_i \in T$ for $i = 1,\ldots,k$, and
\item $u_i = f(y_i)$ and $v_i = f(x_i)$ for $i = 1,\ldots,k$.  
\end{enumerate}
    We allow for the possibility that $x_i = y_{i+1}$ or $u_i = v_i$ for each $i = 0,\ldots,k$.  Moreover, we allow for the possibility that the pair $(x_0,y_1)$ is omitted.  In this case, the only restriction on $u_1$ is that it lies in $T$.  Likewise, we allow for the possibility that the pair $(x_k,y_{k+1})$ is omitted.  In this case, the only restriction on $v_k$ is that it lies in $T$.  We say the sequence is from $x_0$ to $y_{k+1}$, modifying to use $u_1$ or $v_k$ accordingly if the pair $(x_0,y_1)$ or $(x_k,y_{k+1})$ respectively is omitted. A schematic for an admissible sequence appears in Figure~\ref{fig:adm}.
\end{definition}


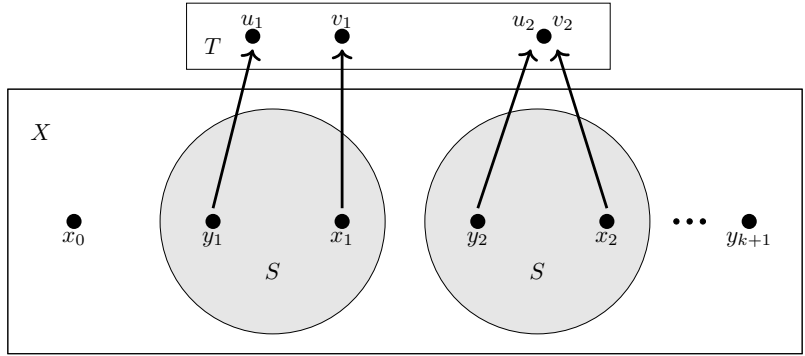
\begin{figure}[ht]
\scalebox{.88}{
\begin{tikzpicture}

\def\xl{.2} 
\def\xt{2.9} 
\def\yt{4.8} 
\def\xc{4} 
\def\yc{2} 
\def\cs{4} 
\def\rx{12} 
\def\ry{4} 
\def\ll{2.35} 
\def\xo{3} 
\def\xa{.2} 
\def\ta{.3} 
\def\to{-.6} 
\def\rr{1.7} 
\def\xx{1} 
\def\ex{.15}
\def\tx{.5}
\def\ys{-1} 

\draw (0,0) rectangle (\rx,\ry);
\coordinate (X) at (.5, \ry - .4);
        \node[below] at (X) {$X$};

\draw[thin, fill=gray!20] (\xc,\yc) circle [radius=\rr];
\draw[thin, fill=gray!20] (\xc+\cs,\yc) circle [radius=\rr];
    \coordinate (S) at (\xc,\yc-.5);
        \node[below] at (S) {$S$};
    \coordinate (S1) at (\xc + \cs,\yc-.5);
        \node[below] at (S1) {$S$};

\draw (0,0) rectangle (\rx,\ry);
    \draw (\xt + \xl - \to - \tx - .5,\yt - \tx) rectangle (\xt + \xl + \cs + \xx + \tx + .5,\yt + \tx);
    \coordinate (T) at (\xt + \xl - \to - \tx - .1,\yt - \tx + .1);
        \node[above] at (T) {$T$};

    \coordinate (x0) at (\xc - \xo,\yc);
        \node[below,yshift=\ys] at (x0) {$x_0$};
            \draw[thick, fill=black] (x0) circle [radius=.1];

    \coordinate (y1) at (\xt + \xl,\yc);
        \node[below,yshift=\ys] at (y1) {$y_1$};
            \draw[thick, fill=black] (y1) circle [radius=.1];
            
    \coordinate (u1) at (\xt + \xl - \to,\yt);
        \node[above] at (u1) {$u_1$};
            \draw[thick, fill=black] (\xt + \xl - \to,\yt) circle [radius=.1];

    \draw [very thick,  ->] (\xt + \xl,\yc + \xa) -- (\xt + \xl - \to,\yt - \xa);
    
    \coordinate (v1) at (\xt + \ll - \xl,\yt);
        \node[above] at (v1) {$v_1$};
            \draw[thick, fill=black] (\xt + \ll- \xl,\yt) circle [radius=.1];

    \coordinate (x1) at (\xt + \ll - \xl,\yc);
        \node[below,yshift=\ys] at (x1) {$x_1$};
            \draw[thick, fill=black] (\xt + \ll- \xl,\yc) circle [radius=.1];

    \draw [very thick, <-]  (\xt  + \ll - \xl,\yt - \xa) -- (\xt + \ll - \xl ,\yc + \xa);

    \coordinate (y2) at (\xt + \xl + \cs,\yc);
        \node[below,yshift=\ys] at (y2) {$y_2$};
            \draw[thick, fill=black] (\xt + \xl + \cs,\yc) circle [radius=.1];

    \coordinate (u1) at (\xt + \xl + \cs + \xx,\yt + \xl);
        \node[left] at (u1) {$u_2$};
        
    \draw [very thick, ->] (\xt + \xl + \cs ,\yc + \xa) -- (\xt + \xl  + \cs + \xx - .2,\yt - \xa) ;
 
    \coordinate (v2) at (\xt + \xl + \cs + \xx,\yt + \xl);
        \node[right] at (v2) {$v_2$};
            \draw[thick, fill=black] (\xt + \xl + \cs + \xx,\yt) circle [radius=.1];

    \coordinate (x2) at (\xt + + \ll - \xl + \cs,\yc);
        \node[below,yshift=\ys] at (x2) {$x_2$};
            \draw[thick, fill=black] (\xt + \ll- \xl + \cs,\yc) circle [radius=.1];

    \draw [very thick, <-]  (\xt + \xl + \cs + \xx + .2 ,\yt - \xa) -- (\xt + \ll - \xl + \cs ,\yc + \xa)  ;

    \coordinate (y_k+1) at (\xc + \cs + \xo + .2,\yc);
        \node[below,yshift=\ys] at (y_k+1) {$y_{k+1}$};
            \draw[thick, fill=black] (\xc + \cs + \xo + .2,\yc) circle [radius=.1];

    \draw[thin, fill=black] (\xc + \ll - \xl + \cs + \ex,\yc) circle [radius=.04];
    \draw[thin, fill=black] (\xc + \ll - \xl + \cs + \ex + .2,\yc) circle [radius=.04];
    \draw[thin, fill=black] (\xc + \ll - \xl + \cs + \ex - .2,\yc) circle [radius=.04];

\end{tikzpicture}
}
\caption{An admissible sequence}
\label{fig:adm}
\end{figure}


\begin{definition}\label{def:pseudo-metric}
The \emph{length} of an admissible sequence $\gamma$ as defined in~\eqref{eq:admissbile path} is:
\begin{equation}\label{eq:admissible seq length}
\ell(\gamma) = d_X(x_0,y_1) + \sum_{i=1}^k \bigl(d_T(u_i,v_i) + d_X(x_i,y_{i+1})\bigr).
\end{equation}
\end{definition}


The next lemma shows that the length of an admissible sequence between points $x$ and $y$ in $X$ is bounded below by a linear function of the distance in $X$ between $x$ and $y$.  This lemma is essential to the proof of Theorem~\ref{thm:main} as it forms the basis of the proving the pseudo-isometry inequalities.  

\begin{lemma}\label{lem:admissible length lower bound}
    Let $\gamma$ be an admissible sequence from $x$ to $y$, where $x,y \in X$.  Then \[d_X(x,y) \leq K^2\ell(\gamma) + KC .\] 
\end{lemma}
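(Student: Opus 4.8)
The plan is to reduce everything, via the triangle inequality in $X$, to controlling a single bridging distance. Write $x = x_0$ and $y = y_{k+1}$. The case $k = 0$ is immediate, since then $\gamma$ is the lone pair $(x_0,y_1)$ and $d_X(x,y) = d_X(x_0,y_1) = \ell(\gamma)$. For $k \geq 1$, the hypothesis $x,y \in X$ forces both terminal pairs of $\gamma$ to be present, and the triangle inequality gives
\[ d_X(x_0, y_{k+1}) \leq d_X(x_0, y_1) + d_X(y_1, x_k) + d_X(x_k, y_{k+1}). \]
The two outer terms already occur inside $\ell(\gamma)$ (after discarding nonnegative summands), so the whole problem concentrates on estimating the middle term $d_X(y_1, x_k)$, where $y_1, x_k \in S$.

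The key idea is to transport this estimate into $T$ and apply the lower pseudo-isometry inequality \emph{only once}. Running the triangle inequality along the path $u_1, v_1, u_2, v_2, \ldots, u_k, v_k$ in $T$ yields
\[ d_T(u_1, v_k) \leq \sum_{i=1}^k d_T(u_i, v_i) + \sum_{i=1}^{k-1} d_T(v_i, u_{i+1}). \]
For each $i$ with $1 \leq i \leq k-1$ both $x_i$ and $y_{i+1}$ lie in $S$, so the upper (Lipschitz) bound of the pseudo-isometry applies to $v_i = f(x_i)$ and $u_{i+1} = f(y_{i+1})$, giving $d_T(v_i, u_{i+1}) \leq K\, d_X(x_i, y_{i+1})$. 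Since $u_1 = f(y_1)$ and $v_k = f(x_k)$ with $y_1, x_k \in S$, a single application of the lower bound gives
\[ d_X(y_1, x_k) = d_S(y_1, x_k) \leq K\, d_T(u_1, v_k) + KC. \]
Substituting the $T$-path estimate, reassembling with the triangle inequality in $X$, and using $K \geq 1$ to dominate each remaining term by the corresponding term of $K^2\ell(\gamma)$, I expect the additive error to collapse to a single $KC$, yielding the claim.

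The main obstacle — and precisely the reason a pseudo-isometry rather than a quasi-isometry is needed — is the control of the additive constant. The naive route of bounding each jump $d_X(y_i, x_i)$ separately through the lower inequality would contribute an error of $kKC$, growing with the number $k$ of surgery crossings, which cannot be absorbed into $K^2\ell(\gamma) + KC$. Collapsing all $k$ jumps into one path in $T$ and invoking the lower bound only once removes this defect, but it forces the upper-bound inequality to be used $k-1$ times when assembling the $T$-path; were that inequality to carry an additive term $+C$, as it would for a quasi-isometry, the resulting error would be $(k-1)C$, reintroducing exactly the linear-in-$k$ blow-up we sought to avoid. Thus the Lipschitz (additive-free) upper bound is indispensable, and I anticipate the delicate part of writing the argument to be the careful bookkeeping of which inequality is invoked, and how many times, so that the final constants come out as $K^2$ and $KC$.
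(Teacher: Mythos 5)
Your proposal is correct and follows essentially the same route as the paper: a triangle inequality in $X$ reducing to the bridging term $d_X(y_1,x_k)$, a single application of the lower pseudo-isometry bound, a triangle inequality along $u_1,v_1,\ldots,u_k,v_k$ in $T$, the additive-free Lipschitz bound on each $d_T(v_i,u_{i+1})$, and absorption of everything into $K^2\ell(\gamma)+KC$ using $K\ge 1$. Your closing discussion of why the Lipschitz upper bound is indispensable matches the paper's Remark~\ref{rmk:admissible length lower bound} almost exactly.
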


\begin{proof}
Let the admissible sequence $\gamma$ be given by:
\begin{equation*}
\gamma : (x_0,y_1), (u_1,v_1), (x_1,y_2), \ldots ,(u_k,v_k), (x_k,y_{k+1})
\end{equation*}
where $x_0 = x$ and $y_{k+1} = y$. 

By the definition of an admissible sequence, we have $f(x_i) = v_i$ and $f(y_i) = u_i$ for $i = 1,\ldots,k$.  The assumption that $f$ is a pseudo-isomtery implies 
\begin{equation*}
d_X(y_1,x_k) \leq Kd_T(u_1,v_k) + KC \text{ and } d_T(v_i,u_{i+1}) \leq Kd_X(x_i,y_{i+1}).
\end{equation*}
Combining the triangle inequality with the first inequality gives~\eqref{eq:1},~\eqref{eq:2} and~\eqref{eq:3}.  Regrouping gives~\eqref{eq:4} and the second of these inequalities is used in ~eqref{eq:5}.  As $K \geq 1$, further rearranging gives~\eqref{eq:6}.  Finally,~\eqref{eq:7} follows from the definition of the length of $\gamma$. 
\begin{align}
    d_X(x,y) & \leq d_X(x_0,y_1) + d_X(y_1,x_k) + d_X(x_k,y_{k+1}) \label{eq:1} \\
    & \leq d_X(x_0,y_1) + (Kd_T(u_1,v_k) + KC) + d_X(x_k,y_{k+1}) \label{eq:2}  \\
    & \leq d_X(x_0,y_1) + K\left(\sum_{i=1}^{k-1} \bigl(d_T(u_i,v_i) + d_T(v_i,u_{i+1}) \bigr) + d_T(u_k,v_k)\right)  \nonumber \\
    & \hspace{2.19cm} +  d_X(x_k,y_{k+1}) + KC  \label{eq:3} \\
    & = d_X(x_0,y_1) + K\left(\sum_{i=1}^{k} d_T(u_i,v_i) + \sum_{i=1}^{k-1} d_T(v_i,u_{i+1})\right) \nonumber  \\
    & \hspace{2.19cm} + d_X(x_k,y_{k+1}) + KC  \label{eq:4} \\
    & \leq d_X(x_0,y_1) + K\left(\sum_{i=1}^{k} d_T(u_i,v_i) + K\sum_{i=1}^{k-1} d_X(x_i,y_{i+1})\right) \nonumber \\ 
    & \hspace{2.19cm} + d_X(x_k,y_{k+1}) + KC  \label{eq:5} \\
    & \leq K^2\left( d_X(x_0,y_1) + \sum_{i=1}^k \bigl(d_T(u_i,v_i) + d_X(x_i,y_{i+1})\bigr)\right) + KC  \label{eq:6} \\
    & \leq K^2 \ell(\gamma) + KC.  \label{eq:7}
\end{align}
This completes the proof of the lemma.
\end{proof}

\begin{remark}\label{rmk:admissible length lower bound}
    Note that in the proof above it is necessary that $f$ is a pseudo-isometry and not merely a quasi-isometry.  In passing from~\eqref{eq:4} to~\eqref{eq:5}, the $d_X$ summation has no additive term, effectively allowing us to bound the lengths with a multiplicative constant.  Had $f$ been just a quasi-isometry this summation would induce $k-1$ additive constants.  The number of such constants reflects the number of steps in the admissible sequence which is not bounded by the distance. This makes it impossible to bound the distance between $x$ and $y$ in terms of the length of an admissible sequence between them.  
\end{remark}

As we complete the construction of the surgered space let us recall the orginal space $X$, a subset $S \subset X$ and a pseudo-isometry $f \from S \to T$.  We first glue $S$ to $T$ forming the space $X'$: 
\begin{equation*}
    X' = \raisebox{3pt}{$X \cup T$} \Big/ \raisebox{-3pt}{$s \sim f(s), \, \,  \forall s \in S$}.
\end{equation*} 
In other words, points in $X'$ are equivalence classes.  Let $j \from X \cup T \to X'$ be the quotient map that takes a point to its equivalence class.  These equivalence classes are one of three types:
\begin{enumerate}
\item If $x \in X - S$, then $j(x) = \{x\}$, a singleton,
\item If $x \in S$, then $j(x) = \{y \in S \mid f(y) = f(x) \} \cup \{ f(x)\}$, or
\item If $u \in T$, then $j(u) = \{y \in S \mid f(y) = u \} \cup \{ u \}$.
\end{enumerate} 
Note, the first set in the union for type (3) may be empty.

The infimum of lengths of admissible sequences induces a pseudo-metric $p_{X'} \from X' \times X' \to \RR$  defined by:
\begin{equation*}
    p_{X'}(x',y') = \inf \{\ell(\gamma) \mid \gamma\} 
\end{equation*} 
where $\gamma$ is an admissible sequence from $x$ to $y$ where $j(x) = x'$ and $j(y) = y'$.  
We define $(\hX_f,d_{\hX_f})$ as the metric space induced by identifying points in $(X',p_{X'})$ that have pseudo-distance equal to 0.  If the corresponding quotient map is $q \from X' \to \hX_f$ we have
\begin{equation*}
d_{\hX_f}(\hat{x},\hat{y}) = \inf\{ p_{X'}(x',y') \mid q(x') = \hat{x} \text{ and } q(y') = \hat{y}\}.
\end{equation*}  
There is an induced map $F \from X \to \hX_f$ given by the composition:
\begin{equation*}
F \from X {\buildrel j \over \to} X' {\buildrel q \over \to} \hX_f.
\end{equation*}
Summarizing the above, we have that $d_{\hX_f}(\hat{x},\hat{y})$ is the infimum of the set of lengths of admissible sequences from a point in $F^{-1}(\hat{x})$ to a point in $F^{-1}(\hat{y})$. The map $F$ can be thought of as a kind of surgery on $X$, in which a subset $S$ is exised and replaced by a set $T$.  

We remark here that an immediate consequence of this definition is that 
\[d_{\hX_f}(q(x'),q(y')) \leq p_{X'}(x',y') \mbox{ } \forall \mbox{ } x',y'  \in X'.\]
This will be used in the proof of Theorem \ref{thm:main}.



The lemma below indicates that if this surgery glues two points together then the two points were a bounded distance apart in the original metric.    

\begin{lemma}\label{lem:same image upper bound}
If $x,y \in X$ and $F(x) = F(y)$, then $d_X(x,y) \leq 3KC$.
\end{lemma}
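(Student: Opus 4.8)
The plan is to prove the statement by using the hypothesis $F(x) = F(y)$ to produce a very short admissible sequence between $x$ and $y$, then feed that sequence into Lemma~\ref{lem:admissible length lower bound} to bound $d_X(x,y)$. Saying $F(x)=F(y)$ means $q(j(x)) = q(j(y))$, i.e., $j(x)$ and $j(y)$ lie in the same $q$-fiber, which by construction means $p_{X'}(j(x),j(y)) = 0$. By the definition of $p_{X'}$ as an infimum of lengths of admissible sequences, for every $\varepsilon > 0$ there is an admissible sequence $\gamma$ from $x$ to $y$ with $\ell(\gamma) < \varepsilon$. Applying Lemma~\ref{lem:admissible length lower bound} to this $\gamma$ gives $d_X(x,y) \leq K^2 \ell(\gamma) + KC < K^2 \varepsilon + KC$, and letting $\varepsilon \to 0$ yields $d_X(x,y) \leq KC$.

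If the argument really went through this cleanly it would give the sharper bound $KC$ rather than $3KC$, so I expect the actual situation to be subtler, and the main obstacle to be understanding precisely when $p_{X'}(j(x),j(y)) = 0$ can fail to be literally achieved and what the worst case looks like. The factor of $3$ suggests that the bound $d_X(x,y) \leq 3KC$ is meant to cover the cases where $x$ or $y$ (or both) lie in $S$ and get identified with other points of $S$ having the same image under $f$. The cleaner approach is therefore to analyze directly the combinatorial meaning of $F(x) = F(y)$ using the explicit description of the equivalence classes in $X'$ (types (1)--(3)) together with the identification of zero-pseudo-distance points in passing to $\hX_f$.

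Concretely, I would first handle the case where $j(x) = j(y)$ already in $X'$. By the classification of equivalence classes, this forces $x, y \in S$ with $f(x) = f(y)$, whence part~(1) of Definition~\ref{def:pseudo qi} gives $\frac{1}{K}d_S(x,y) - C \leq d_T(f(x),f(y)) = 0$, so $d_X(x,y) = d_S(x,y) \leq KC$. The remaining case is $j(x) \neq j(y)$ but $q(j(x)) = q(j(y))$, i.e., $p_{X'}(j(x),j(y)) = 0$ realized as an infimum over admissible sequences. The plan is to take a sequence of admissible sequences $\gamma_n$ from $x$ to $y$ with $\ell(\gamma_n) \to 0$; but rather than apply the lemma as a black box, I would track the endpoints. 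When $\ell(\gamma_n)$ is small, the genuinely short sequences want to route through $T$, and the additive cost is controlled by the pseudo-isometry constants: the surjectivity-type condition (2) and the lower bound in (1) conspire to produce a bound of the form $KC$ per ``$S$-to-$T$ transition,'' and in the worst case $x$ and $y$ each contribute one such transition plus a connecting term, accumulating to $3KC$.

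The step I expect to be the genuine obstacle is bookkeeping the additive errors correctly so that they total exactly $3KC$ rather than something larger, and making sure the infimum being $0$ (rather than attained) does not cost anything in the limit. I would structure the final write-up around Lemma~\ref{lem:admissible length lower bound}: pick $\gamma_n$ with $\ell(\gamma_n) \to 0$, and modify $\gamma_n$ if necessary at its two ends (where $x$ or $y$ may need to be connected into $S$ via condition (2)) so that the lemma applies with total additive overhead $3KC$; then conclude $d_X(x,y) \leq \lim_n\bigl(K^2 \ell(\gamma_n)\bigr) + 3KC = 3KC$. Verifying that the end-corrections introduce at most the claimed additive constants — and no multiplicative blowup — is where the care is needed.
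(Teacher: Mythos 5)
Your instinct that the first, clean argument cannot be right is correct, and the reason is exactly the one you suspect: the infimum defining $p_{X'}(j(x),j(y))$ ranges over admissible sequences between \emph{arbitrary representatives} of the two equivalence classes, so a sequence of length less than $\varepsilon$ runs from some $x_0$ with $j(x_0)=j(x)$ to some $y_0$ with $j(y_0)=j(y)$, not necessarily from $x$ to $y$ themselves. Your final plan is essentially the paper's proof, and the bookkeeping you were worried about resolves as follows: after arranging (by prepending or appending a pair $(s_0,s_0)$) that $x_0,y_0\in X$, the identification $j(x_0)=j(x)$ forces $f(x_0)=f(x)$, and the \emph{lower} bound in condition (1) of Definition~\ref{def:pseudo qi} --- not the coarse surjectivity condition (2), which plays no role here --- gives $d_X(x,x_0)\le KC$, and likewise $d_X(y,y_0)\le KC$; Lemma~\ref{lem:admissible length lower bound} applied to the short sequence gives $d_X(x_0,y_0)\le K^2\varepsilon+KC$, and the triangle inequality yields $d_X(x,y)\le 3KC+K^2\varepsilon$. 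So the three copies of $KC$ are one end-correction at each of $x$ and $y$ plus the additive term of Lemma~\ref{lem:admissible length lower bound}; there is no separate ``$S$-to-$T$ transition'' cost beyond these, and your case $j(x)=j(y)$ (giving the sharper bound $KC$) is subsumed by the general argument.
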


\begin{proof}
Fix points $x,y \in X$ and suppose that $F(x) = F(y) = \hat{x}$.  Let $x' = j(x)$ and $y' = j(y)$.  By the construction of $\hX_f$, we have that for any two points in $q^{-1}(\hat{x})$ the pseudo-distance is equal to 0.  Hence $p_{X'}(x',y') = 0$. 
Therefore, for any $\epsilon > 0$, there must be points $x_0,y_0 \in X \cup T$ with $j(x_0) = x'$, $j(y_0) = y'$, and an admissible sequence $\gamma$ from $x_0$ to $y_0$ of length less than $\epsilon$.

By the definition of $j \from X \cup T \to X'$ we can assume that $x_0$ and $y_0$ lie in $X$.  Indeed, if $x_0 \in T$ then we must have $x_0 = f(s_0)$ for some $s_0 \in S$ as $j$ is injective on $T - f(S)$.  We could then prepend the sequence $\gamma$ by the ordered pair $(s_0,s_0)$ to get a new sequence with the same properties that starts in $X$.  Likewise if $y_0 \in T$.

    Since $j(x) = j(x_0)$, we must have that $f(x) = f(x_0)$ and hence \[d_X(x,x_0) \leq KC\] since $f$ is a pseudo-isometry.  Likewise we have that $d_X(y,y_0) \leq KC$.  Using Lemma~\ref{lem:admissible length lower bound}, we have that $d_X(x_0,y_0) \leq K^2\epsilon + KC$.  Combining these with the triangle inequality, we find:
\begin{equation*}
d_X(x,y) \leq d_X(x,x_0) + d_X(x_0,y_0) + d_X(y_0,y) \leq KC + K^2\epsilon + KC + KC.
\end{equation*}
As this holds for all $\epsilon > 0$, we have $d_X(x,y) \leq 3KC$ as claimed.
\end{proof}

\section{Proof of Main Theorem}\label{sec:main}

We can now prove the main theorem.  It is restated here for convenience.

\begin{restate}{Theorem}{thm:main}
Suppose $(X,d_X)$ and $(T,d_T)$ are metric spaces and consider a subset $S \subset X$ as a metric space with metric induced from $X$.  If $f \from S \to T$ is a pseudo-isometry, then the natural map $F \from X \to \hX_f$ is a pseudo-isometry as well.
\end{restate}

\begin{proof}
Let $X$, $S$, $T$ be as in the statement of the theorem and $f \from S \to T$ a pseudo-isometry with constants $K \geq 1$ and $C > 0$ as in Definition~\ref{def:pseudo qi}.  Let $X'$ and $\hX_f$ be as defined in Section~\ref{sec:Xf} and $F \from X \to \hX_f$ the map induced by the composition:
\begin{equation*}
F \from X {\buildrel j \over \to} X' {\buildrel q \over \to} \hX_f.
\end{equation*} 

First we show $F$ is coarsely surjective.  By construction, for any $\hat{y} \in \hX_f - F(X)$ there is some $t \in T$ such that $\hat{y} = q(j(t))$.  As $f \from S \to T$ is coarsely surjective, there is an $s \in S$ where $d_T(f(s),t) \leq C$.  The admissible sequence $(s,s), (f(s),t)$ from $s$ to $t$ has length at most $C$.  Therefore $p_{X'}(j(s),j(t)) \leq C$ and hence
\begin{equation*}
d_{\hX_f}(F(s),\hat{y}) = d_{\hX_f}(q(j(s)),q(j(t))) \leq p_{X'}(j(s),j(t)) \leq C
\end{equation*}
as well.

Next we demonstrate an upper bound on $d_{\hX_f}(F(x),F(y))$.  Given points $x,y \in X$, for the admissible sequence $\gamma : (x,y)$ we find that $p_{X'}(j(x),j(y)) \leq \ell(\gamma) = d_X(x,y)$.  Therefore 
\begin{equation*}
d_{\hX_f}(F(x),F(y)) = d_{\hX_f}(q(j(x)),q(j(y))) \leq p_{X'}(j(x),j(y)) \leq d_X(x,y).
\end{equation*}

    Finally, we demonstrate a lower bound on $d_{\hX_f}(F(x),F(y))$.  Let $\epsilon > 0$.  Given points $x,y \in X$, we fix points $x_0, y_0 \in X$ where $F(x) = F(x_0)$, $F(y) = F(y_0)$, and for which there exists an admissible sequence $\gamma$ from $x_0$ to $y_0$ of length less than $d_{\hX_f}(F(x),F(y)) + \epsilon$.  We note that such points and admissible sequence exist by an argument similar to the one presented in Lemma~\ref{lem:same image upper bound}.  

By Lemmas~\ref{lem:admissible length lower bound} and \ref{lem:same image upper bound} we now have that:
\begin{align*}
d_X(x,y) &\leq d_X(x,x_0) + d_X(x_0,y_0) + d_X(y_0,y) \\
&\leq  3KC + K^2\ell(\gamma) + KC + 3KC \\
&\leq K^2\left(d_{\hX_f}(F(x),F(y)) + \epsilon\right) + 7KC. 
\end{align*}
As this holds for all $\epsilon > 0$, rearranging we find that:
\begin{equation*}
\frac{1}{K^2} d_X(x,y) - \frac{7C}{K} \leq d_{\hX_f}(F(x),F(y)).\qedhere
\end{equation*}
\end{proof}

\section{Examples}\label{sec:examples}

In this section we present two infinite families of examples and one example that showcases the need for the stronger pseudo-isometry assumption.   The first is an application of Theorem~\ref{thm:main} where we show that the regular tree of degree 4 is pseudo-isometric to the regular tree of degree 6 (Example~\ref{ex:tree}).  We show how to generalize this using finite graphs and covering space theory.  Next, we present an example that shows that every regular tree of degree at least 3 is pseudo-isometric to the regular tree of degree 3 (Example~\ref{ex:all-trees}).  Lastly, the ``pseudo-isometry'' assumption in Theorem~\ref{thm:main} cannot be weakened to ``quasi-isometry,'' even if the conclusion is also weakened  (Example~\ref{ex:strong}).


\begin{example}\label{ex:tree}
Let $X$ be the regular tree of degree 4 where every edge is isometric to the unit interval $[0,1]$.  We consider the subset $S \subset X$ consisting of every other horizontal edge, including its incident vertices, as indicated in Figure~\ref{fig:tree}.  Specifically, we can identify $X$ as the Cayley graph of the free group of rank 2, $F_2$, corresponding to a basis $\{g_1,g_2\}$ (see Chapter 2 of~\cite{Clay} for details).  Then there are two orbits of edges corresponding to the set of horizontal edges and the set of vertical edges respectively in Figure~\ref{fig:tree}.  Let $e_1$ be the edge whose originating vertex is the identity and whose terminal vertex is $g_1$.  Then the set $S$ corresponds to the set of edges of the form $we_1$ where the terminal syllable of $w$ is an even power of $g_1$.  In other words, $w \in F_2$ is of the form:
\begin{equation*}
w = w'g_1^{2p}
\end{equation*}  
where $w' \in F_2$ is either trivial or ends in $g_2^{\pm 1}$ and $p \in \ZZ$.  

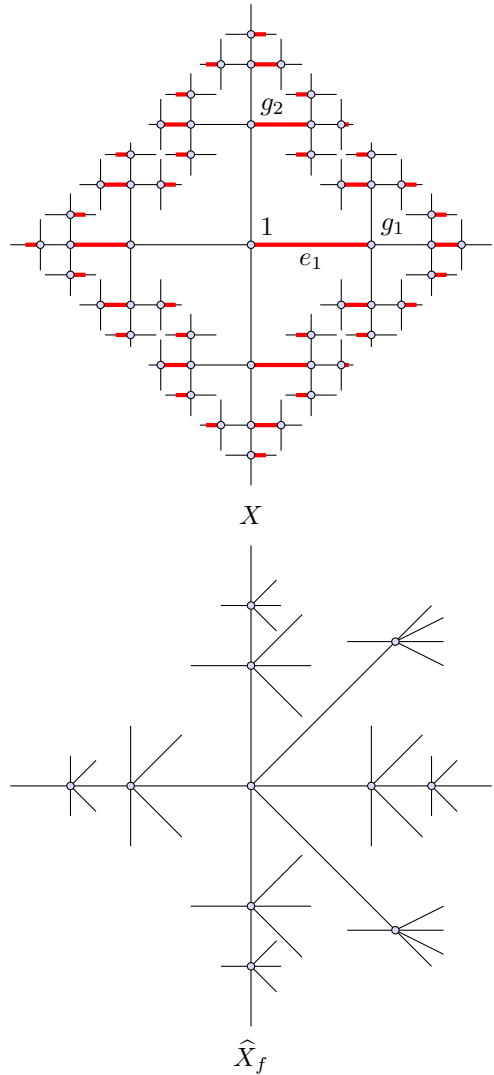
\begin{figure}[ht]
\centering
\begin{tikzpicture}[scale=0.8]
\def\ss{2}
\def\xx{0.85}
\def\rr{1.75}
\begin{scope}
\foreach \a in {0,90,180,270} {
	\draw[rotate=\a] (0,0) -- (2*\ss,0);
	\draw[rotate=\a] (\ss,\xx*\ss) -- (\ss,-\xx*\ss);
	\draw[rotate=\a] (1.5*\ss,0.5*\xx*\ss) -- (1.5*\ss,-0.5*\xx*\ss);
	\draw[rotate=\a] (1.75*\ss,0.25*\xx*\ss) -- (1.75*\ss,-0.25*\xx*\ss);
	\draw[rotate=\a] (\ss-0.25*\xx*\ss,0.75*\ss) -- (\ss+0.25*\xx*\ss,0.75*\ss);
	\draw[rotate=\a] (\ss-0.25*\xx*\ss,-0.75*\ss) -- (\ss+0.25*\xx*\ss,-0.75*\ss);
	\draw[rotate=\a] (\ss-0.5*\xx*\ss,0.5*\ss) -- (\ss+0.5*\xx*\ss,0.5*\ss) (\ss-0.5*\xx*\ss,-0.5*\ss) -- (\ss+0.5*\xx*\ss,-0.5*\ss);	
	\draw[rotate=\a] (1.5*\ss-0.25*\xx*\ss,0.25*\ss) -- (1.5*\ss+0.25*\xx*\ss,0.25*\ss) (1.5*\ss-0.25*\xx*\ss,-0.25*\ss) -- (1.5*\ss+0.25*\xx*\ss,-0.25*\ss);
	\draw[rotate=\a] (0.75*\ss,0.5*\ss-0.25*\xx*\ss) -- (0.75*\ss,0.5*\ss+0.25*\xx*\ss) (1.25*\ss,0.5*\ss-0.25*\xx*\ss) -- (1.25*\ss,0.5*\ss+0.25*\xx*\ss);
	\draw[rotate=\a] (0.75*\ss,-0.5*\ss-0.25*\xx*\ss) -- (0.75*\ss,-0.5*\ss+0.25*\xx*\ss) (1.25*\ss,-0.5*\ss-0.25*\xx*\ss) -- (1.25*\ss,-0.5*\ss+0.25*\xx*\ss);
}
\draw[ultra thick,red] (-1.875*\ss,0) -- (-1.75*\ss,0) (-1.5*\ss,0) -- (-1*\ss,0) (0,0) -- (\ss,0) (1.5*\ss,0) -- (1.75*\ss,0);
\foreach \a in {1,-1} {
	\draw[ultra thick,red,yscale=\a] (-1.5*\ss,0.25*\ss) -- (-1.375*\ss,0.25*\ss) (1.5*\ss,0.25*\ss) -- (1.625*\ss,0.25*\ss);
	\draw[ultra thick,red,yscale=\a] (-1.25*\ss,0.5*\ss) -- (-\ss,0.5*\ss) (-0.75*\ss,0.5*\ss) -- (-0.625*\ss,0.5*\ss) (0.75*\ss,0.5*\ss) -- (\ss,0.5*\ss) (1.25*\ss,0.5*\ss) -- (1.375*\ss,0.5*\ss);
	\draw[ultra thick,red,yscale=\a] (-1.125*\ss,0.75*\ss) -- (-\ss,0.75*\ss) (-0.625*\ss,0.75*\ss) -- (-0.5*\ss,0.75*\ss) (0.375*\ss,0.75*\ss) -- (0.5*\ss,0.75*\ss) (0.875*\ss,0.75*\ss) -- (\ss,0.75*\ss);
	\draw[ultra thick,red,yscale=\a] (-0.75*\ss,\ss) -- (-0.5*\ss,\ss) (0,\ss) -- (0.5*\ss,\ss) (0.75*\ss,\ss) -- (0.8125*\ss,\ss);
	\draw[ultra thick,red,yscale=\a] (-0.625*\ss,1.25*\ss) -- (-0.5*\ss,1.25*\ss) (0.375*\ss,1.25*\ss) -- (0.5*\ss,1.25*\ss);
	\draw[ultra thick,red,yscale=\a] (-0.375*\ss,1.5*\ss) -- (-0.25*\ss,1.5*\ss) (0,1.5*\ss) -- (0.25*\ss,1.5*\ss);
	\draw[ultra thick,red,yscale=\a] (0,1.75*\ss) -- (0.125*\ss,1.75*\ss);
}
\draw[thin, fill=blue!15] (0,0) circle [radius=\rr pt];
\foreach \a in {0,90,180,270} {
	\draw[thin, fill=blue!15,rotate=\a] (\ss,0) circle [radius=\rr pt];
	\draw[thin, fill=blue!15,rotate=\a] (1.5*\ss,0) circle [radius=\rr pt];
	\draw[thin, fill=blue!15,rotate=\a] (1.75*\ss,0) circle [radius=\rr pt];
	\draw[thin, fill=blue!15,rotate=\a] (1.5*\ss,0.25*\ss) circle [radius=\rr pt];
	\draw[thin, fill=blue!15,rotate=\a] (1.5*\ss,-0.25*\ss) circle [radius=\rr pt];
	\draw[thin, fill=blue!15,rotate=\a] (0.75*\ss,0.5*\ss) circle [radius=\rr pt];
	\draw[thin, fill=blue!15,rotate=\a] (\ss,0.5*\ss) circle [radius=\rr pt];
	\draw[thin, fill=blue!15,rotate=\a] (1.25*\ss,0.5*\ss) circle [radius=\rr pt];
	\draw[thin, fill=blue!15,rotate=\a] (0.75*\ss,-0.5*\ss) circle [radius=\rr pt];
	\draw[thin, fill=blue!15,rotate=\a] (\ss,-0.5*\ss) circle [radius=\rr pt];
	\draw[thin, fill=blue!15,rotate=\a] (1.25*\ss,-0.5*\ss) circle [radius=\rr pt];
	\draw[thin, fill=blue!15,rotate=\a] (\ss,0.75*\ss) circle [radius=\rr pt];
	\draw[thin, fill=blue!15,rotate=\a] (\ss,-0.75*\ss) circle [radius=\rr pt];
}
\node at (0,0) [above right] {$1$};
\node at (\ss,0) [above right] {$g_1$};
\node at (0,\ss) [above right] {$g_2$};
\node at (0.5*\ss,0) [below] {$e_1$};
\node at (0,-2.25*\ss) {$X$};
\end{scope}
\begin{scope}[yshift=-9cm]
\foreach \a in {0,90,180,270} {
	\draw[rotate=\a] (0,0) -- (2*\ss,0);
	\draw[rotate=\a] (\ss,0.5*\ss) -- (\ss,-0.5*\ss);
}
\draw (\ss+0.5*\xx*\ss,0.5*\xx*\ss) -- (\ss,0) -- (\ss+0.5*\xx*\ss,-0.5*\xx*\ss);
\draw (-\ss+0.5*\xx*\ss,0.5*\xx*\ss) -- (-\ss,0) -- (-\ss+0.5*\xx*\ss,-0.5*\xx*\ss);
\draw (1.5*\ss,1.5*\ss) -- (0,0) -- (1.5*\ss,-1.5*\ss);
\draw (0.5*\xx*\ss,\ss+0.5*\xx*\ss) -- (0,\ss) -- (0.5*\xx*\ss,\ss-0.5*\xx*\ss);
\draw (0.5*\xx*\ss,-\ss+0.5*\xx*\ss) -- (0,-\ss) -- (0.5*\xx*\ss,-\ss-0.5*\xx*\ss);
\draw (1.5*\ss,0.25*\ss) -- (1.5*\ss,-0.25*\ss) (1.5*\ss+0.25*\xx*\ss,0.25*\xx*\ss) -- (1.5*\ss,0) -- (1.5*\ss+0.25*\xx*\ss,-0.25*\xx*\ss);
\draw (-1.5*\ss,0.25*\ss) -- (-1.5*\ss,-0.25*\ss) (-1.5*\ss+0.25*\xx*\ss,0.25*\xx*\ss) -- (-1.5*\ss,0) -- (-1.5*\ss+0.25*\xx*\ss,-0.25*\xx*\ss);
\draw (0.8*\ss,1.2*\ss) --(1.6*\ss,1.2*\ss) (1.2*\ss+0.4*\ss,1.2*\ss+0.2*\ss) -- (1.2*\ss,1.2*\ss) -- (1.2*\ss+0.4*\ss,1.2*\ss-0.2*\ss);
\draw[yscale=-1] (0.8*\ss,1.2*\ss) --(1.6*\ss,1.2*\ss) (1.2*\ss+0.4*\ss,1.2*\ss+0.2*\ss) -- (1.2*\ss,1.2*\ss) -- (1.2*\ss+0.4*\ss,1.2*\ss-0.2*\ss);
\draw (-0.25*\ss,1.5*\ss) -- (0.25*\ss,1.5*\ss) (0.25*\xx*\ss,1.5*\ss+0.25*\xx*\ss) -- (0,1.5*\ss) -- (0.25*\xx*\ss,1.5*\ss-0.25*\xx*\ss);
\draw[yscale=-1] (-0.25*\ss,1.5*\ss) -- (0.25*\ss,1.5*\ss) (0.25*\xx*\ss,1.5*\ss+0.25*\xx*\ss) -- (0,1.5*\ss) -- (0.25*\xx*\ss,1.5*\ss-0.25*\xx*\ss);
\draw[thin, fill=blue!15] (0,0) circle [radius=\rr pt];
\draw[thin, fill=blue!15] (1.2*\ss,1.2*\ss) circle [radius=\rr pt];
\draw[thin, fill=blue!15] (1.2*\ss,-1.2*\ss) circle [radius=\rr pt];
\foreach \a in {0,90,180,270} {
	\draw[thin, fill=blue!15,rotate=\a] (\ss,0) circle [radius=\rr pt];
	\draw[thin, fill=blue!15,rotate=\a] (1.5*\ss,0) circle [radius=\rr pt];
}
\node at (0,-2.25*\ss) {$\hX_f$};
\end{scope}
\end{tikzpicture}
\caption{The regular tree of degree 4 with the subset $S$ (shown in red).  The surgered space is the regular tree of degree 6.}
\label{fig:tree}
\end{figure}

Let $T \subset X$ be the set of midpoints of the edges in $S$, considered as a metric space using the metric from $X$.  The map $f \from S \to T$ that sends each edge in $S$ to its midpoint is a pseudo-isometry where $K = 2$ and $C = 1$.  As shown in Figure~\ref{fig:tree}, the surgered space $\hX_f$ is the regular tree of degree 6. 
\end{example}

This example fits into a larger context.  Let $G$ be a finite graph where every edge is isometric to $[0,1]$ and $F \subset G$ a subforest.  In each component $F_i \subseteq F$ fix a point $p_i \in F_i$.  Now consider the universal cover $X$ of $G$ and let $S$ be the full pre-image of $F$ and $T$ the full pre-image of the set $\{p_i\}$.  The natural map $f \from S \to T$ that sends a lift of $F_i$ to the corresponding lift of $p_i$ is a pseudo-isometry.  The surgered space $\hX_f$ is the universal cover of the graph $G'$ obtained by collapsing each component of $F$ to a point.  This set-up for Example~\ref{ex:tree} is shown in Figure~\ref{fig:graph}.      

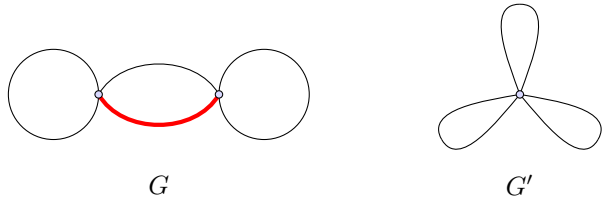
\begin{figure}[ht]
\centering
\begin{tikzpicture}[scale=0.8]
\def\rr{1.75}
\draw (-0.75,0) circle [radius=0.75];
\draw (2.75,0) circle [radius=0.75];
\draw (0,0) to[out=60,in=120] (2,0);
\draw[red,ultra thick] (0,0) to[out=-60,in=-120] (2,0);
\node at (1,-1.5) {$G$};
\draw[thin, fill=blue!15] (0,0) circle [radius=\rr pt];
\draw[thin, fill=blue!15] (2,0) circle [radius=\rr pt];
\begin{scope}[xshift=7cm]
\foreach \a in {0,120,240}{
	\draw[rotate=\a] (0,0) to[out=70,in=0] (0,1.5);
	\draw[rotate=\a] (0,0) to[out=110,in=180] (0,1.5);
}
\draw[thin, fill=blue!15] (0,0) circle [radius=\rr pt];
\node at (0,-1.5) {$G'$};
\end{scope}
\end{tikzpicture}
\caption{The collapse map from $G \to G'$ lifts to the map described in Example~\ref{ex:tree}.  The forest $F \subseteq G$ is indicated in red.}\label{fig:graph}
\end{figure}

\begin{example}\label{ex:all-trees}
    Let $X$ be a regular tree of degree $n$ where every edge is isometric to the unit interval $[0,1]$.  As we show, a \textit{partial fold} of two edges of $X$ is a pseudo-isometry.  This produces a new tree whose vertices have degree either three or $n-1$ and applying this map iteratively produces a tree of degree three.  Because each stage of the map is a pseudo-isometry, we get a pseudo-isometry from any regular tree to a tree of degree three.  Moreover, it is quite clear that regularity of $X$ is not required in this construction.  

    At each vertex choose two edges $u$ and $v$. Since each edge is isometric to $[0,1]$, assume the vertex shared by $u$ and $v$ corresponds to $0$.  Let $u_0$ and $v_0$ denote the \textit{half-edges} of $u$ and $v$, respectively, that are identified with $[0,\frac{1}{2}]$.  The union of each pair of half-edges is a \textit{half-corner}.  Let $S \subset X$  be the set of all the half-corners and let $T \subset X$ be the set of all the $u_0$.  
i
    The map $f \from S \to T$ that sends each half-corner in $S$ to its corresponding $u_0$ is a pseudo-isometry where $K = 3$ and $C = 1$.  As suggested in Figure~\ref{fig:ntree1}, the surgered space $\hX_f$ is a tree where every vertex is either degree three or degree $n-1$. By iterating this process on vertices of degree not equal to three, we arrive at a regular tree of degree three, see Figure~\ref{fig:ntree2}.


\end{example}

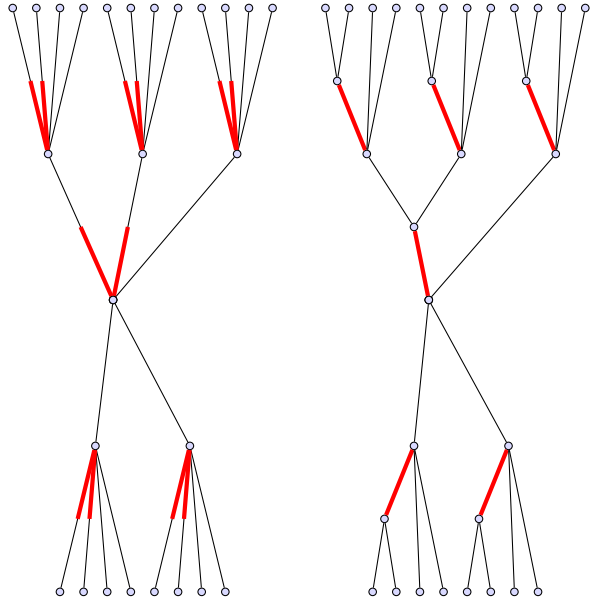
\begin{figure}
\begin{forest}
  for tree={
    draw,             
    circle,           
    minimum size=circle,           
    minimum size=1mm, 
    inner sep=0pt,    
    fill=blue!15,     
    font=\small\bfseries, 
    s sep=2mm,        
    l sep=5mm,       
    anchor=center,    
    if level=1{no edge, before computing xy={l=0,s=0}}{}
  }
  [,fill=blue!50, baseline, name=r
    [, for tree={grow'=north}, label=center:{}, name=s
        [,phantom,tier=pone
           [,phantom, tier=ptwo
             [,phantom, tier=pthree
                [,phantom, tier=pfour
           ]]]]
        [,label=center:{}, tier = ptwo,name=a
          [,label=center:{}, tier=pfour,name=aa]
          [,label=center:{}, tier=pfour,name=aaa]
          [,label=center:{},tier=pfour]
          [,label=center:{},tier=pfour]
        ]
        [,label=center:{},   tier = ptwo,name=b
          [,label=center:{}, tier=pfour,name=bb]
          [,label=center:{}, tier=pfour,name=bbb]
          [,label=center:{},tier=pfour]
          [,label=center:{},tier=pfour]
        ]
        [,label=center:{}, tier = ptwo,name=c
          [,label=center:{},  tier=pfour,name=cc]
          [,label=center:{},  tier=pfour,name=ccc]
          [,label=center:{},tier=pfour]
          [,label=center:{},tier=pfour]
        ]
    ]
    [,for tree={grow=south},label=center:{}
         [,phantom,tier=mone
           [,phantom, tier=mtwo
             [,phantom, tier=mthree
               [,phantom, tier=mfour]
           ]]]
        [, label=center:{}, name=d, tier=mtwo
          [,label=center:{}, tier=mfour, name=dd]
          [,label=center:{}, tier=mfour, name=ddd]
          [,label=center:{},tier=mfour]
          [,label=center:{},tier=mfour]
        ]
        [, label=center:{}, name=e, tier=mtwo
          [,label=center:{},  tier=mfour,name=ee]
          [,label=center:{},  tier=mfour,name=eee]
          [,label=center:{},tier=mfour]
          [,label=center:{},tier=mfour]
          [,phantom]
        ]
    ]
]
    \path [-,red,ultra thick]
          (d) edge ($(d)!0.5!(dd)$)
          (d) edge ($(d)!0.5!(ddd)$)
          (e) edge ($(e)!0.5!(ee)$)
          (e) edge ($(e)!0.5!(eee)$)
          (c) edge ($(c)!0.5!(cc)$)
          (c) edge ($(c)!0.5!(ccc)$)
          (b) edge ($(b)!0.5!(bbb)$)
          (b) edge ($(b)!0.5!(bb)$)
          (r) edge ($(r)!0.5!(a)$)
          (r) edge ($(r)!0.5!(b)$)
          (a) edge ($(a)!0.5!(aaa)$)
          (a) edge ($(a)!0.5!(aa)$);
\end{forest}
\quad
\begin{forest}
  for tree={
    draw,             
    circle,           
    minimum size=1mm, 
    inner sep=0pt,    
    fill=blue!15,     
    font=\small\bfseries, 
    s sep=2mm,        
    l sep=5mm,       
    anchor=center,    
    if level=1{no edge, before computing xy={l=0,s=0}}{}
  }
 [,fill=blue!50, baseline
   [, for tree={grow'=north}, label=center:{}
        [,phantom,tier=pone
            [,phantom, tier=ptwo
              [,phantom, tier=pthree
                [,phantom, tier=pfour
             ]]]] 
        [,label=center:{}, edge={red,ultra thick}
            [, label=center:{}, tier = pone
                [,label=center:{}, edge={red,ultra thick}, tier=ptwo
                [,label=center:{}, tier=pthree]
                [,label=center:{},  tier=pthree]
              ]
              [,label=center:{},tier=pthree]
              [,label=center:{},tier=pthree]
            ]
            [, label=center:{}, tier = pone
                  [,label=center:{}, edge={red,ultra thick}, tier = ptwo
                  [,label=center:{}, tier=pthree]
                  [,label=center:{}, tier=pthree]
              ]
              [,label=center:{},tier=pthree]
              [,label=center:{},tier=pthree]
            ]  
        ]
        [, label=center:{}, tier = pone
              [,label=center:{}, edge={red,ultra thick}, tier=ptwo
              [,label=center:{}, tier=pthree]
              [,label=center:{}, tier=pthree]
          ]
          [,label=center:{},tier=pthree]
          [,label=center:{},tier=pthree]
        ] 
    ]
    [,for tree={grow=south},label=center:{}
        [,phantom,tier=mone
           [,phantom, tier=mtwo
             [,phantom, tier=mthree
                [,phantom, tier=mfour
           ]]]]
          [,label=center:{}, tier=mtwo
               [,label=center:{}, edge={red,ultra thick}, tier=mthree
                   [,label=center:{}, tier=mfour]
                   [,label=center:{}, tier=mfour]
               ]
               [,label=center:{},tier=mfour]
               [,label=center:{},tier=mfour]
           ]
         [,label=center:{}, tier=mtwo
                [,label=center:{}, edge={red,ultra thick}, tier=mthree
                    [,label=center:{}, tier=mfour]
                    [,label=center:{}, tier=mfour]
                ]
                [,label=center:{},tier=mfour]
                [,label=center:{},tier=mfour]
        ]
    ]
]
\end{forest}
\caption{Beginning with a 5-tree, a partial fold of red edges producing vertices of degree either three or four.} 
\label{fig:ntree1}
\end{figure}

\begin{figure}
\begin{forest}
  for tree={
    draw,             
    circle,           
    minimum size=1mm, 
    inner sep=0pt,    
    fill=blue!15,     
    font=\small\bfseries, 
    s sep=2mm,        
    l sep=5mm,       
    anchor=center,    
    if level=1{no edge, before computing xy={l=0,s=0}}{}
  }
 [,fill=blue!50, baseline, name=r
   [, for tree={grow'=north}, label=center:{} 
        [,phantom,tier=pone
            [,phantom, tier=ptwo
              [,phantom, tier=pthree
                [,phantom, tier=pfour
             ]]]] 
        [,label=center:{}, 
        [, label=center:{}, tier = pone, name=f
                [,label=center:{}, tier=ptwo
                [,label=center:{}, tier=pthree]
                [,label=center:{}, tier=pthree]
              ]
              [,label=center:{},tier=pthree, name=ff]
              [,label=center:{},tier=pthree, name=fff]
            ]
            [, label=center:{},      tier = pone, name=g
                  [,label=center:{}, tier = ptwo, 
                  [,label=center:{}, tier=pthree]
                  [,label=center:{}, tier=pthree]
              ]
              [,label=center:{},tier=pthree, name=gg]
              [,label=center:{},tier=pthree, name=ggg]
            ]  
        ]
        [, label=center:{}, tier = pone, name=h
              [,label=center:{}, tier=ptwo
              [,label=center:{}, tier=pthree, ]
              [,label=center:{}, tier=pthree, ]
          ]
          [,label=center:{},tier=pthree, name=hh]
          [,label=center:{},tier=pthree, name=hhh]
        ] 
    ]
    [,for tree={grow=south},label=center:{}, name=rs
         [,phantom,tier=mone
            [,phantom, tier=mtwo
            [,phantom, tier=mthree,
            [,phantom, tier=mfour]]]]
        [, label=center:{}, name=i, tier=mtwo
              [,label=center:{},    tier=mthree, name=ii
                  [,label=center:{},tier=mfour]
                  [,label=center:{},tier=mfour]
              ]
              [,label=center:{},tier=mfour, name=iii]
              [,label=center:{},tier=mfour, name=iiii]
        ]
        [, label=center:{}, name=j, tier=mtwo
           [,label=center:{},     tier=mthree
               [,label=center:{}, tier=mfour]
               [,label=center:{}, tier=mfour]
           ]
           [,label=center:{},tier=mfour, name=jj]
           [,label=center:{},tier=mfour, name=jjj]
                ]
            ]
]
   \path [-,red,ultra thick]
          (j) edge ($(j)!0.5!(jj)$)
          (j) edge ($(j)!0.5!(jjj)$)
          (rs) edge ($(rs)!0.5!(j)$)
          (rs) edge ($(rs)!0.5!(i)$)
          (i) edge ($(i)!0.5!(iii)$)
          (i) edge ($(i)!0.5!(iiii)$)
          (h) edge ($(h)!0.5!(hh)$)
          (h) edge ($(h)!0.5!(hhh)$)
          (g) edge ($(g)!0.5!(ggg)$)
          (g) edge ($(g)!0.5!(gg)$)
          (f) edge ($(f)!0.5!(ff)$)
          (f) edge ($(f)!0.5!(fff)$);
\end{forest}
\quad
\begin{forest}
  for tree={
    draw,             
    circle,           
    minimum size=1mm, 
    inner sep=0pt,    
    fill=blue!15,     
    font=\small\bfseries, 
    s sep=2mm,        
    l sep=5mm,       
    anchor=center,    
    if level=1{no edge, before computing xy={l=0,s=0}}{}
  }
 [,fill=blue!50, baseline
   [, for tree={grow'=north}, label=center:{} 
        [,phantom,tier=pone
            [,phantom, tier=ptwo
              [,phantom, tier=pthree
                [,phantom, tier=pfour
             ]]]] 
        [,label=center:{}, 
        [, label=center:{}, tier = pone, 
                [,label=center:{}, tier=ptwo
                [,label=center:{}, tier=pthree]
                [,label=center:{}, tier=pthree]
              ]
              [,label=center:{}, tier=ptwo, edge = {red,ultra thick}
              [,label=center:{},tier=pthree, ]
              [,label=center:{},tier=pthree, ]
              ]
            ]
            [, label=center:{},      tier = pone, 
                  [,label=center:{}, tier = ptwo, 
                  [,label=center:{}, tier=pthree]
                  [,label=center:{}, tier=pthree]
              ]
              [,label=center:{}, tier=ptwo, edge = {red,ultra thick}
              [,label=center:{},tier=pthree, ]
              [,label=center:{},tier=pthree, ]
              ]
            ]  
        ]
        [, label=center:{}, tier = pone, 
              [,label=center:{}, tier=ptwo
              [,label=center:{}, tier=pthree, ]
              [,label=center:{}, tier=pthree, ]
          ]
        [,label=center:{}, tier=ptwo, edge = {red,ultra thick}
          [,label=center:{},tier=pthree, ]
          [,label=center:{},tier=pthree, ]
        ]
        ] 
    ]
    [,for tree={grow=south},label=center:{}
    [, label=center:{},  edge = {red, ultra thick}
         [, label=center:{},  
         [,phantom,tier=mone
                 [,phantom, tier=mtwo
                   [,phantom, tier=mthree
                     [,phantom, tier=mfour
                 ]]]]
                [,label=center:{},    tier=mone
                    [,label=center:{},tier=mtwo]
                    [,label=center:{},tier=mtwo]
                ]
              [,label=center:{}, tier=mone, edge = {red,ultra thick}
                [,label=center:{},tier=mtwo, ]
                [,label=center:{},tier=mtwo, ]
             ]
         ]
         [, label=center:{},  
            [,label=center:{},     tier=mone, 
                [,label=center:{}, tier=mtwo]
                [,label=center:{}, tier=mtwo]
            ]
         [,label=center:{}, tier=mone, edge = {red,ultra thick}
            [,label=center:{},tier=mtwo, ]
            [,label=center:{},tier=mtwo, ]
            ]
         ]
    ]
    ]
]
\end{forest}
\caption{In each step of the iteration, $S$ and $T$ are redefined.  Continuing the example above, we apply the map again and produce a 3-tree.}
\label{fig:ntree2}
\end{figure}
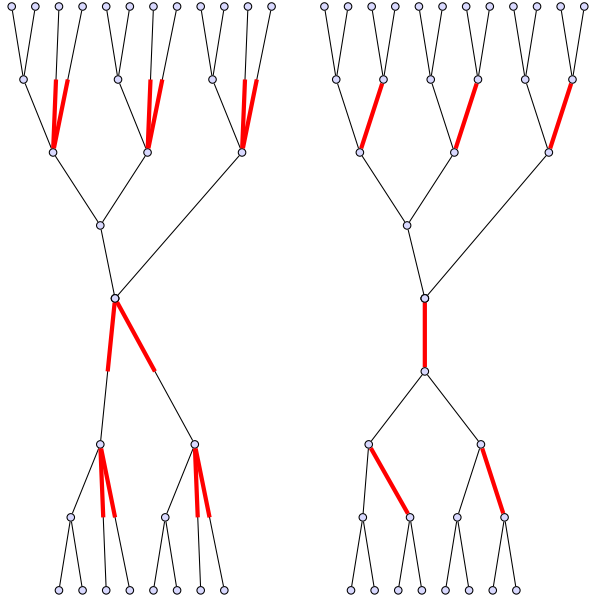

\begin{example}\label{ex:strong}
Let $X$ be the closed positive ray in $\RR$, i.e., $X = [0,\infty)$.  Define two sequences $(a_i), (b_i) \subset X$ as follows starting with $i=0$:
\begin{align*}
a_i &= 1 + i - \left(\sfrac12\right)^i: 0, \sfrac{3}{2}, \sfrac{11}{4}, \sfrac{31}{8}, \ldots \\
b_i &= 2 + i - \left(\sfrac12\right)^i: 1, \sfrac{5}{2}, \sfrac{15}{4}, \sfrac{39}{8}, \ldots
\end{align*}  
These sequences are related by the equations $a_i = b_{i-1} + \left(\sfrac{1}{2}\right)^i$ and $b_i = a_i + 1$.  We consider the subset $S \subset X$ given by:
\begin{equation*}
S = \bigcup_{i=0}^\infty [a_i,b_i].
\end{equation*}
See Figure~\ref{fig:strong}. 

\begin{figure}[ht]
\scalebox{.88}{
\begin{tikzpicture}
\def\s{1.75}
\node at (-1,0) {$X:$};
\draw[very thick] (0,0) -- (6*\s,0);
\filldraw (6*\s + 0.2,0) circle [radius=1pt];
\filldraw (6*\s + 0.35,0) circle [radius=1pt];
\filldraw (6*\s + 0.5,0) circle [radius=1pt];
\draw[line width=3pt,red] (0,0) -- (\s,0) (1.5*\s,0) -- (2.5*\s,0) (2.75*\s,0) -- (3.75*\s,0) (3.8725*\s,0) -- (4.8725*\s,0) (4.96875*\s,0) -- (5.96875*\s,0);
\draw (0,-0.15) -- (0,0.15) (\s,-0.15) -- (\s,0.15);
\draw (1.5*\s,-0.15) -- (1.5*\s,0.15) (2.5*\s,-0.15) -- (2.5*\s,0.15);
\draw (2.75*\s,-0.15) -- (2.75*\s,0.15) (3.75*\s,-0.15) -- (3.75*\s,0.15);
\draw (3.8725*\s,-0.15) -- (3.8725*\s,0.15) (4.8725*\s,-0.15) -- (4.8725*\s,0.15);
\draw (4.96875*\s,-0.15) -- (4.96875*\s,0.15) (5.96875*\s,-0.15) -- (5.96875*\s,0.15);
\node at (0+0.1,0) [label=below:{$a_0$}] {};
\node at (\s+0.1,0) [label=below:{$b_0$}] {};
\node at (1.5*\s+0.1,0) [label=above:{$a_1$}] {};
\node at (2.5*\s+0.05,0) [label=above:{$b_1$}] {};
\node at (2.75*\s+0.1,0) [label=below:{$a_2$}] {};
\node at (3.75*\s+0.1,0) [label=below:{$b_2$}] {};
\node at (3.8725*\s+0.1,0) [label=above:{$a_3$}] {};
\node at (4.8725*\s+0.05,0) [label=above:{$b_3$}] {};
\node at (4.96875*\s+0.1,0) [label=below:{$a_4$}] {};
\node at (5.96875*\s+0.1,0) [label=below:{$b_4$}] {};
\end{tikzpicture}
}
\caption{The closed positive ray with the subset $S$ (shown in red).  The surgered space is the half-open interval $[0,1) \subset \RR$.}\label{fig:strong}
\end{figure}
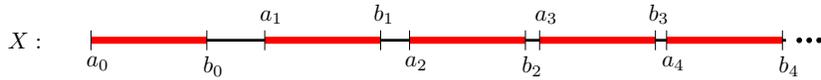

    Let $T \subset X$ be the set of all $a_i$, considered as a metric space using the metric from $X$.  The map $f \from S \to T$ that sends each interval $[a_i,b_i]$ to $a_i$ is a (continuous) quasi-isometry with $K = 1, C = 2$, but it is not a pseudo-isometry for any choice of constants $K$ and $C$.  Indeed, $d_S(b_{i-1},a_i) = \left(\sfrac{1}{2}\right)^i \to 0$ as $i \to \infty$  while $d_T(f(b_{i-1}),f(a_i)) = d_T(a_{i-1},a_i) \geq 1$ for all $i$.  The map collapses all but intervals of the form $[b_n,a_{n+1}]$, each having length $(\frac{1}{2})^{n+1}$.  Since $\sum_0^\infty  (\frac{1}{2})^{n+1} = 1$, the surgered space $\hX_f$ is isometric to the half-open interval $[0,1) \subset \RR$.  In particular, $X$ is not pseudo-isometric (nor quasi-isometric) to $\hX_f$.

	The failure of this example is directly tied to the failure for Lemma~\ref{lem:admissible length lower bound} in this setting.  There are admissible sequences with bounded length connecting points arbitrarily far apart in $X$.  See Remark~\ref{rmk:admissible length lower bound}.
\end{example}

This example shows the necessity of the hypothesis of a pseudo-isometry in the statement of Theorem~\ref{thm:main}.  It would be interesting to find robust conditions on $S \subset X$ so that if $f \from S \to T$ is a quasi-isometry, then the surgered space $\hX_f$ is quasi-isometric to $X$.

\bibliographystyle{vancouver}
\bibliography{pseudo-isometry}

\end{document}